\newtheorem{thm}{Theorem}[section]
\newtheorem{theorem}[thm]{Theorem}
\newtheorem{lemma}[thm]{Lemma}
\newtheorem{proposition}[thm]{Proposition}
\theoremstyle{definition}
\newtheorem{definition}[thm]{Definition}
\newtheorem{example}[thm]{Example}
\newtheorem{observation}[thm]{Observation}
\begin{document}


\newcommand{\green}[1]{{\textbf{#1}}}
\newcommand{\Red}[1]{{\color{red}{#1}}}

\newcommand{\id}{\relax{\rm 1\kern-.28em 1}}
\newcommand{\R}{\mathbb{R}}
\newcommand{\C}{\mathbb{C}}
\newcommand{\Cbar}{\overline{\C}}
\newcommand{\Z}{\mathbb{Z}}
\newcommand{\Q}{\mathbb{Q}}
\newcommand{\kk}{k}
\newcommand{\bD}{\mathbb{D}}
\newcommand{\bG}{\mathbb{G}}
\newcommand{\bP}{\mathbb{P}}
\newcommand{\bM}{\mathbb{M}}
\newcommand{\g}{\mathfrak{G}}
\newcommand{\fh}{\mathfrak{h}}
\newcommand{\e}{\epsilon}
\newcommand{\Uu}{\mathfrak{U}}

\newcommand{\cA}{\mathcal{A}}
\newcommand{\cB}{\mathcal{B}}
\newcommand{\cC}{\mathcal{C}}
\newcommand{\cD}{\mathcal{D}}
\newcommand{\cI}{\mathcal{I}}
\newcommand{\cL}{\mathcal{L}}
\newcommand{\cK}{\mathcal{K}}
\newcommand{\cO}{\mathcal{O}}
\newcommand{\cG}{\mathcal{G}}
\newcommand{\cJ}{\mathcal{J}}
\newcommand{\cF}{\mathcal{F}}
\newcommand{\cP}{\mathcal{P}}
\newcommand{\cU}{\mathcal{U}}
\newcommand{\ep}{\mathcal{E}}
\newcommand{\E}{\mathcal{E}}
\newcommand{\cH}{\mathcal{O}}
\newcommand{\cV}{\mathcal{V}}
\newcommand{\cPO}{\mathcal{PO}}
\newcommand{\cHol}{\mathrm{H}}

\newcommand{\rGL}{\mathrm{GL}}
\newcommand{\rSU}{\mathrm{SU}}
\newcommand{\rSL}{\mathrm{SL}}
\newcommand{\rPSL}{\mathrm{PSL}}
\newcommand{\rSO}{\mathrm{SO}}
\newcommand{\rOSp}{\mathrm{OSp}}
\newcommand{\rSpin}{\mathrm{Spin}}
\newcommand{\rsl}{\mathrm{sl}}
\newcommand{\rsu}{\mathrm{su}}
\newcommand{\rM}{\mathrm{M}}
\newcommand{\rdiag}{\mathrm{diag}}
\newcommand{\rP}{\mathrm{P}}
\newcommand{\rdeg}{\mathrm{deg}}
\newcommand{\pt}{\mathrm{pt}}
\newcommand{\red}{\mathrm{red}}

\newcommand{\bm}{\mathbf{m}}

\newcommand{\M}{\mathrm{M}}
\newcommand{\End}{\mathrm{End}}
\newcommand{\Hom}{\mathrm{Hom}}
\newcommand{\diag}{\mathrm{diag}}
\newcommand{\rspan}{\mathrm{span}}
\newcommand{\rank}{\mathrm{rank}}
\newcommand{\Gr}{\mathrm{Gr}}
\newcommand{\ber}{\mathrm{Ber}}

\newcommand{\str}{\mathrm{str}}
\newcommand{\Sym}{\mathrm{Sym}}
\newcommand{\tr}{\mathrm{tr}}
\newcommand{\defi}{\mathrm{def}}
\newcommand{\Ber}{\mathrm{Ber}}
\newcommand{\spec}{\mathrm{Spec}}
\newcommand{\sschemes}{\mathrm{(sschemes)}}
\newcommand{\sschemeaff}{\mathrm{ {( {sschemes}_{\mathrm{aff}} )} }}
\newcommand{\rings}{\mathrm{(rings)}}
\newcommand{\Top}{\mathrm{Top}}
\newcommand{\sarf}{ \mathrm{ {( {salg}_{rf} )} }}
\newcommand{\arf}{\mathrm{ {( {alg}_{rf} )} }}
\newcommand{\odd}{\mathrm{odd}}
\newcommand{\alg}{\mathrm{(alg)}}
\newcommand{\sa}{\mathrm{(salg)}}
\newcommand{\sets}{\mathrm{(sets)}}
\newcommand{\smflds}{\mathrm{(smflds)}}
\newcommand{\mflds}{\mathrm{(mflds)}}
\newcommand{\shcps}{\mathrm{(shcps)}}
\newcommand{\sgrps}{\mathrm{(sgrps)}}
\newcommand{\SA}{\mathrm{(salg)}}
\newcommand{\salg}{\mathrm{(salg)}}
\newcommand{\varaff}{ \mathrm{ {( {var}_{\mathrm{aff}} )} } }
\newcommand{\svaraff}{\mathrm{ {( {svar}_{\mathrm{aff}} )}  }}
\newcommand{\ad}{\mathrm{ad}}
\newcommand{\Ad}{\mathrm{Ad}}
\newcommand{\pol}{\mathrm{Pol}}
\newcommand{\Lie}{\mathrm{Lie}}
\newcommand{\Proj}{\mathrm{Proj}}
\newcommand{\rGr}{\mathrm{Gr}}
\newcommand{\rFl}{\mathrm{Fl}}
\newcommand{\rPol}{\mathrm{Pol}}
\newcommand{\rdef}{\mathrm{def}}
\newcommand{\ah}{\mathrm{ah}}

\newcommand{\uspec}{\underline{\mathrm{Spec}}}
\newcommand{\uproj}{\mathrm{\underline{Proj}}}

\newcommand{\sym}{\cong}

\newcommand{\al}{\alpha}
\newcommand{\lam}{\lambda}
\newcommand{\de}{\delta}
\newcommand{\ttau}{\tilde \tau}
\newcommand{\D}{\Delta}
\newcommand{\s}{\sigma}
\newcommand{\lra}{\longrightarrow}
\newcommand{\ga}{\gamma}
\newcommand{\ra}{\rightarrow}

\newcommand{\wbar}{\overline{w}}
\newcommand{\zbar}{\overline{z}}
\newcommand{\fbar}{\overline{f}}
\newcommand{\etabar}{\overline{\eta}}
\newcommand{\zetabar}{\overline{\zeta}}
\newcommand{\betabar}{\overline{\beta}}
\newcommand{\albar}{\overline{\alpha}}
\newcommand{\abar}{\overline{a}}
\newcommand{\dbar}{\overline{d}}
\newcommand{\tbar}{\overline{t}}
\newcommand{\thetabar}{\overline{\theta}}
\newcommand{\Mbar}{{\overline{M}}}
\newcommand{\aubar}{\underline{a}}
\newcommand{\gabar}{\overline{\gamma}}
\newcommand{\fg}{\mathfrak{g}}
\newcommand{\Span}{\mathrm{span}}

\newcommand{\NOTE}{\bigskip\hrule\medskip}

\newcommand{\G}{{(\C^{1|1})}^\times}
\newcommand{\pair}[2]{\langle \, #1, #2\, \rangle}
\newcommand{\cinfty}{\mathcal{C}^\infty}

\medskip

\centerline{\Large \bf   The Peter-Weyl Theorem for $\rSU(1|1)$\\} 

\medskip

\bigskip

\centerline{C. Carmeli$^\natural$, R. Fioresi$^\flat$, S. Kwok$^\star$}

\medskip

\centerline{\it $^\natural$ DIME, Universit\`a di Genova Via Magliotto 2, I-17100 Savona, Italy}
 \centerline{\it  Via Magliotto 2, I-17100 Savona, Italy.}
\centerline{{\footnotesize e-mail: claudio.carmeli@gmail.com}}

\medskip

\centerline{\it $^\flat$ Dipartimento di Matematica, Universit\`{a} di
Bologna }
 \centerline{\it Piazza di Porta S. Donato, 5. 40126 Bologna. Italy.}
\centerline{{\footnotesize e-mail: rita.fioresi@UniBo.it}}

\medskip

\centerline{\it $^\star$ Mathematics Research Unit, University of Luxembourg}
 \centerline{\it 6, Rue Richard Coudenhove-Kalergi, L-1359, Luxembourg.}
\centerline{{\footnotesize e-mail: 
stephen.kwok@uni.lu}}

\bigskip

\begin{abstract}
We study a generalization of the results in \cite{cfk} to the case of
$\rSU(1|1)$ interpreted as the supercircle $S^{1|2}$. We describe all of its
finite dimensional complex irreducible representations, 
we give a reducibility result for representations not containing the trivial character, and we compute explicitly the corresponding
matrix elements. In the end we give the  Peter-Weyl theorem for  $S^{1|2}$.
\end{abstract}

\section{Introduction} 
\label{intro}

The theory of representations of compact supergroups has not yet been
fully understood and in particular there is neither a decisive classification result
in this category, nor a thorough treatment of the fundamental results, as
for example the Peter-Weyl theorem. 
In this paper we want to proceed and give another important example, beside the
one already studied in \cite{cfk}, namely we want to fully discuss the case
of $\rSU(1|1)$. This is a natural generalization of the $S^{1|1}$ case
studied in \cite{cfk} and \cite{kwok}: 
the reduced group is still $S^1$ in both cases, 
but here we are considering two odd variables, so
we may very well call $\rSU(1|1)$, $S^{1|2}$ the supercircle
with two odd dimensions. This
generalization is non trivial, because it is well known that 
as soon as the odd dimension becomes greater than one, new supersymmetric
phenomenona may appear to make the whole theory diverge significantly
from the ordinary one, 
though in this particular
case, it does not happen. For example in \cite{fk2}, we see that,
$\mathrm{Aut}(\bP^{1|1})$,  the 
automorphism group of the projective superspace in one odd dimension 
coincides with the projective linear supergroup, but this
isomorphism is lost when the odd dimension is greater than one. In
case of odd dimension one, there is
a strong connection between $\mathrm{Aut}(\bP^{1|1})$ 
and the theory of SUSY curves
(see also \cite{dw1}, \cite{fk}).
In \cite{cfk}, the theory of SUSY curves was linked with real forms of
$(\C^{1|1})^\times$ and it was proven the remarkable result that the
real forms reducing to $S^1$ are actually all isomorphic to $S^1$.  
When the odd dimension is greater than one, the theory departs significantly
from the ordinary one, the automorphism group of $\bP^{1|n}$ is not
the projective linear supergroup and the connection with SUSY curves 
and $S^{1|n}$
is either lost or not immediately evident. 

\medskip
This is our main motivation for examining the case $\rSU(1|1)$: we want
to see if new phenomena arise and if, in the future, we can try to establish
a connection with SUSY curves and shed light on this part of mathematics
still very actively studied (see \cite{dw1} and \cite{witten}).

\medskip
Our paper is organized as follows. In Section \ref{prelim} we recall
very briefly our notation and few facts about real forms both in
the sheaf theoretic and Super Harish-Chandra pair approach to supergroups.
In Section \ref{s11-sec} we completely classify the 
finite-dimensional irreducible complex representations of
the supergroup $S^{1|1}$, and we also give a reducibility result for representations not containing the trivial character. 
This section completes the study initiated in
\cite{cfk}. In Section \ref{su-sec} we classify the representations
of $\rSU(1|1)$ and we prove the super version of the Peter-Weyl Theorem.

\medskip
{\bf Acknowledgements}. 
All of the authors want to express their
deep gratitude to Prof. V. S. Varadarajan for his constant encouragement
and all of his suggestions, which have led
not just to this paper, but to many results the authors proved 
together or alone throughout
their careers.

\section{Supergeometry preliminaries} \label{prelim}

We would like to quickly summarize few definitions and key facts about
supergeometry especially to establish our notation; for all the details
see \cite{ccf}, \cite{dm}, \cite{vsv2} and 
\cite{fl}, besides our main reference \cite{cfk}.

\medskip
\noindent
Let us take our ground field $k=\R$ or $\C$.

\begin{definition} \label{Tpt}
A {\it superspace} $S=(|S|, \cO_S)$ is a topological space $|S|$
together with a sheaf of superalgebras $\cO_S$, such that the stalk $\cO_{S,x}$
is a local superalgebra, for $x \in |S|$.
A {\it morphism} $\phi:S \lra T$ of superspaces is given by
$\phi=(|\phi|, \phi^*)$, where $\phi: |S| \lra |T|$ is a map of
topological spaces and $\phi^*:\cO_T \lra \phi_*\cO_S$ is
a local sheaf morphism.
A  \textit{differentiable (resp. analytic) supermanifold}
of dimension $p|q$ is a superspace $M=(|M|, \cO_M)$
where $|M|$ is manifold and $\cO_M$ is a sheaf of superalgebras over 
$\R$ (resp. $\C$),
which is locally isomorphic  to 
$\R^{p|q}$ (resp. $\C^{p|q}$), where $\R^{p|q}=(\R^p, C^\infty_{\R^p}\otimes
\wedge(\xi_1, \dots, \xi_q))$ and similarly for $\C^{p|q}$.

\medskip
Let $M$ and $T$ be supermanifolds.  
A \textit{$T$-point} of a supermanifold $M$ is 
a morphism $T \longrightarrow M$ ($T \in \smflds$).  We denote the set of 
all $T$-points 
by $M(T)$. We define the \textit{functor of points} of $M$:
\begin{equation} \label{fopts-eq}
M: \text{(smflds)}^o \lra \sets, \quad T \mapsto M(T), \quad
M(\phi)(\psi)=\psi \circ \phi,
\end{equation}
where $\text{(smflds)}$ denotes the category of supermanifolds and
the index $o$ as usual refers to the opposite category. 
$\psi$ here is a $T$-point of $M$, i.e. $\psi: T \lra M$. We
shall write $\text{(smflds)}_\R$ or $\text{(smflds)}_\C$ whenever it is
necessary to distinguish between real or complex supermanifolds.
\end{definition}

We want to define the real supermanifold underlying a complex supermanifold
and the concept of real form.

\begin{definition} \label{complexconj}
Let $M=(|M|, \cO_M)$ be a complex supermanifold. We define 
the \textit{complex conjugate} $\Mbar$ of $M$ as the complex supermanifold
$\Mbar=(|M|, \cO_{\Mbar})$, where $\cO_\Mbar$ is $\cO_M$ with the 
$\C$-antilinear structure. 
We shall denote the map realizing the
$\C$-antilinear isomorphism between $M$ and $\Mbar$ as
$\sigma: M \lra \Mbar$ and sometimes we shall write $(\sigma^*)^{-1}(f)=\fbar$,
where $f$ is a section of the sheaf $\cO_M$.
We define a \textit{real
structure} on $M$ as an involutive isomorphism of ringed spaces
$\rho: M \lra  \Mbar$, which is $\C$-linear on the sheaves, and such that
$\rho^*: \cO_{\Mbar} \lra \rho_*\cO_M$ is a $\C$-linear sheaf isomorphism.
We furtherly define the isomorphism of ringed superspaces
$\psi=\sigma^{-1} \circ \rho: M \lra M$, which is
$\C$-antilinear on the sheaves $\psi^*=\rho^*\circ (\sigma^*)^{-1}:
\cO_M \lra  \rho_*\cO_M$.

\medskip
The superspace $M^\rho=(|M^\rho|, \cO_{M^\rho})$, 
where $|M|^{|\rho|}$ consists of 
the fixed points of $\rho$ and $\cO_{M^\rho}$ consists of sections
$f\in \cO_M|_{M^\rho}$ such that $\psi^*(f)=f$,
is the  \textit{real form} of $M$ defined by $\rho$. 

\medskip
If $G$ is a complex supergroup and $\rho$ is a supergroup morphism, then
$G^\rho$ is a real supergroup.
\end{definition}

Through the notion of real form it is possible to define
the concept of real underlying supermanifold, which is mostly important for us.

\begin{definition} \label{realforms1} 
We define on $M \times \Mbar$ the real
structure $\tau:  M \times \Mbar \lra \Mbar \times M$, 
$|\tau|(x,y)=(y,x)$ 
and $\tau^*: \cO_{\Mbar \times M} \lra \tau_*\cO_{M \times \Mbar}$ given by:
\begin{equation} \label{tau-eq}
\tau(f \otimes g)=(-1)^{|f||g|}g \otimes f
\qquad f \in \cO_{\Mbar}, \, g \in \cO_M.
\end{equation}
We call $M^\tau$ the \textit{real supermanifold underlying} $M$
and we denote it with $M_\R$.
\end{definition}

The next example is very instructive and essential for our
treatment.

\begin{example}\label{realform-ex11}
We want to understand $\C^{m|n}_\R$ 
the real supermanifold underlying $\C^{m|n}$. 
We define $|\tau|: |\C^{m|n}| \times |\overline{\C}^{m|n}| \lra
|\overline{\C}^{m|n}| \times |\C^{m|n}|$ as $|\tau|(p,q)=(q,p)$, while 
on the sheaves we define  
the $\C$-linear isomorphism 
$\tau^*: \cO_{\Cbar^{m|n} \times \C^{m|n}}$ $\lra$ $\tau_* \cO_{\C^{m|n} \times
\Cbar^{m|n}}$,
$\tau^*(w_i)=z_i$, $\tau(\eta_j)=\zeta_j$,
$\tau^*(z_i)=w_i$, $\tau(\zeta_j)=\eta_j$,
where $(z_i,\zeta_j)$ and $(w_i, \eta_j)$ are global coordinates
on $\C^{m|n}$ and $\Cbar^{m|n}$ respectively (with
a common abuse of notation, we write $z_i \in \cO_{\Cbar^{m|n} \times \C^{m|n}}$ 
in place of the more appropriate 
$1 \otimes z_i$ and similarly for the rest of the
coordinates).
We associate to $\tau^*$ (see (\ref{tau-eq}))
the $\C$-antilinear isomorphism
$\psi^*=\tau^* \circ ((\sigma^*)^{-1} \otimes \sigma^*)$:
\begin{equation} \label{psi-eq}
\psi^*(w_i)=\zbar_i,\, \psi^*(\eta_j)=\zetabar_j, \, 
\psi(z_i)=\wbar_i,\, \psi(\zeta_j)=\etabar_j,
\end{equation}
where we write $\zbar_i$ instead of $(\sigma^*)^{-1}(z_i)$ etc.
We can then write immediately global coordinates on $\C^{m|n}_\R$:
$$
x_i=(z_i+\zbar_i)/2, \quad y_i=(z_i-\zbar_i)/2i, \quad 
\mu_j=(\zeta_j+\zetabar_j)/2, \quad
\nu_j=(\zeta_j-\zetabar_j)/2i
$$
As for the $T$-points, $T \in \smflds_\R$, we have:
$$
\begin{array}{rl}
\C^{m|n}_\R(T)&=\Hom_{\smflds_\R}(T, \C^{m|n}_\R)=\Hom_{\salg_\R}
 (\cO(\C^{m|n}_\R), \cO(T))= \\ \\
&=\{ \phi:  \cO(\C^{m|n}_\R)
\lra \cO(T)\}= \\ \\
&=\{(t_0^1,t_1^1, \dots,t_0^m,t_1^m,  \theta_0^1, \theta_1^1, \dots,
\theta_0^n, \theta_1^n) \, | \\ \\ &\, \qquad
t_0^k,t_1^k  \in \cO(T)_0, \, \theta_0^j,\theta_1^j \in \cO(T)_1\}
\end{array}
$$
Evidently $\C^{m|n}_\R=\R^{2m|2n}$ as one expects.

\medskip
It is customary to define: 
$$
t^k:=t_0^k+it_1^k \quad \tbar^k:=t_0^k-it_1^k, \quad 
\theta^j:=\theta_0^j+i\theta_1^j, \quad \thetabar^j:=\theta_0^j-i\theta_1^j
$$
These are elements in $\C^{m|n}_\R(T) \otimes \C$.

\medskip
Using the language of functor of points and the
local coordinates $t^i$, $\tbar^i$, $\theta^j$, $\thetabar^j$,
it is then very easy to give 
a real form of a given supermanifold, by giving a ($\C$-antilinear)
involution of $\C^{m|n}_\R(T) \otimes \C$ functorial in $T$.
For example we can define:
$$
\sigma(t^k)=\tbar^k, \qquad \sigma(\theta^j)=\thetabar^j
$$
(notice that since $\sigma$ is an involution, it is enough to
define the image of $t^k$ and $\theta^j$). We have immediately that
$(\C^{m|n}_\R(T) \otimes \C)^\sigma=\R^{m|n}(T)$, so that
$\R^{m|n}(T)$ is the real form of $\C^{m|n}_\R(T)$ corresponding to
the involution $\sigma$.
There are however deep questions
on the foundation of the theory regarding the apparent simplicity of
this definition.
We invite the reader to consult \cite{cfk}.
For the case of $\C^{m|n}_\R$ and its open sets, which
is the only case regarding the present work, we do not need the
full theory developed in \cite{cfk} and we can give a real form
simply by looking at the fixed points of a given involution on
the $T$-points.
\end{example}

Another point of view on real forms of Lie supergroups is via
the Super Harish-Chandra pairs (SHCP), that
is viewing a supergroup $G$ as a pair $G=(G_0, \fg)$ consisting
of the reduced group $G_0$ and the Lie superalgebra $\fg=\Lie(G)$
(see \cite{ccf} Ch. 7, \cite{vi2} and \cite{cf} for more details).

\begin{definition} \label{shcp-realforms}
Let $(G_0, \fg)$ be a complex analytic SHCP. 
$(r_0, \rho^r)$ is a \textit{real structure} on $(G_0, \fg)$ if
\begin{enumerate}
\item $r_0:G_0 \lra \overline{G_0}$ is a real structure on the ordinary
complex group $G_0$, $G_0^{r}$ being the 
real Lie group of fixed points. 

\item $\rho^r: \fg \lra \fg$ is a $\C$-antilinear 
involutive Lie superalgebra morphism,  $\fg^{r}\simeq\Lie(G_0^r)$
its fixed points.

\item $(r_0, \rho^r)$ are compatible:
 \begin{equation} \label{comp-eq}
\rho^\psi_{\left.\right| \fg_0}  \simeq  
d\psi_0\qquad \Ad(\psi_0(g))\circ\rho^\psi= \rho^\psi\circ\Ad(g) 
\end{equation}
\end{enumerate}
 
$(G_0^r, \fg^r)$
is called a \textit{real form} of $(G_0, \fg)$. 
\end{definition}

This definition is equivalent to
\ref{complexconj}, see \cite{cfk} Sec. 2 for more details.
 
\section{The supergroup $S^{1|1}$ and its
representations}\label{s11-sec}

In \cite{cfk} we have classified all finite dimensional complex 
irreducible representations
not containing the trivial representation of  the real
Lie supergroup $S^{1|1}$. By definition
$S^{1|1}$ is the real form of the complex analytic supergroup $(\C^{1|1})^\times$  
with respect to the involution:
\begin{equation} \label{inv-eq}
\rho: (\C^{1|1})^\times  \lra  
\overline{(\C^{1|1})}^\times, \qquad
\rho(w,\eta)=(\wbar^{-1},   i\wbar^{-2} \etabar)
\end{equation}

We now want to complete the analysis of  \cite{cfk}, and 
compute the representations of $S^{1|1}$, which
correspond to the trivial representation of $S^1$. We recall that  in the 
language of SHCP, in order
to define a representation of $S^{1|1}$, we need to specify the
action of the reduced group $S^1$ and then of $\Lie(S^{1|1})=\langle C,Z
\rangle$
with $[C,C]=[C,Z]=0$, $[Z,Z]=-2C$.

\medskip

Let $W$ be a $1|1$ dimensional super vector space, 
with homogeneous basis $w_0, w_1$.
We define then a representation of $S^{1|1}$ on $W$ by letting $S^1$ 
act trivially on $W$, and letting $Z$ act by:
\begin{equation} \label{W-eq}
Z \cdot w_0 = 0,\qquad
Z \cdot w_1 = w_0.
\end{equation}
Obviously, we may define the parity-reverse of this representation, i.e. 
again letting $S^1$ act trivially on $W$, but this time setting:
\begin{equation} \label{piW-eq}
Z \cdot w_0 = w_1,\qquad
Z \cdot w_1 = 0.
\end{equation}
\noindent 
 The representations $W$ and $\Pi W$ are clearly indecomposable, 
but not semisimple. They are also non-isomorphic, since $\ker(Z)$ is 
$1|0$-dimensional 
in the first case and $0|1$-dimensional in the second one. 
Such representations will play a key role in the classification 
of weight zero representations. One can readily check they are
$\Ad(S^{1|1})$ or $\Pi \Ad(S^{1|1})$.

\begin{proposition}
\label{prop:weight0}
Let $(\pi, \rho, V)$ be a finite-dimensional $S^{1|1}$-representation on which the reduced group acts with weight $0$. Then $(\pi, \rho, V)$ is isomorphic to $U \oplus \bigoplus_i W_i$, where $U$ is a trivial $S^{1|1}$ representation, and each $W_i$ is isomorphic to $\Ad(S^{1|1})$ or $\Pi \Ad(S^{1|1})$.
\end{proposition}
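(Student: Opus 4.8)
The plan is to reduce the statement to a piece of super linear algebra and then decompose $(\pi,\rho,V)$ explicitly. First I would exploit the SHCP structure. Since the reduced group acts with weight $0$, the representation $\pi$ of $S^1$ is trivial, so its differential vanishes; by the compatibility between the $G_0$-action and the restriction of the $\fg$-action to $\fg_0$, the even generator acts as $\rho(C)=0$. The relation $[Z,Z]=-2C$ then forces $2\rho(Z)^2=-2\rho(C)=0$, hence $\rho(Z)^2=0$. Thus a weight-$0$ representation is exactly the datum of a finite-dimensional super vector space $V=V_0\oplus V_1$ together with an odd operator $Z:=\rho(Z)$ satisfying $Z^2=0$, morphisms being the even $Z$-equivariant maps. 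Equivalently, these are the finite-dimensional modules over the exterior superalgebra $k[Z]/(Z^2)$ with $Z$ odd.

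Next I would decompose $(V,Z)$ by hand. Because $Z$ is odd, both $K:=\ker Z$ and the image $Z(V)=:I$ are graded subspaces, and $I\subseteq K$. I would choose homogeneous complements $M$ and $H$ with $V=M\oplus H\oplus I$ and $K=H\oplus I$. Then $Z|_M:M\to I$ is an isomorphism, since its kernel is $M\cap K=0$ and its image is all of $I$, while $Z$ annihilates $H$ and $I$. Picking a homogeneous basis $\{m_a\}$ of $M$ and setting $i_a:=Zm_a$, each plane $\langle m_a,i_a\rangle$ is $Z$-stable, two-dimensional, with $Zm_a=i_a$ and $Zi_a=0$. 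As $Z$ is odd, $m_a$ and $i_a$ have opposite parities: if $m_a$ is odd the plane is isomorphic to $W$ (identify $w_1=m_a$, $w_0=i_a$), and if $m_a$ is even it is isomorphic to $\Pi W$. The remaining summand $H$ is killed by both $Z$ and $C$, so each of its homogeneous basis vectors spans a one-dimensional trivial representation.

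Collecting the pieces gives $V\cong U\oplus\bigoplus_a\langle m_a,i_a\rangle$, where $U:=H$ is a trivial $S^{1|1}$-representation (possibly with both even and odd part) and each two-dimensional block is $W$ or $\Pi W$. Invoking the remark preceding the statement, namely that $W\cong\Ad(S^{1|1})$ and $\Pi W\cong\Pi\Ad(S^{1|1})$, yields the asserted decomposition.

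The computation is routine once $Z^2=0$ is established; the only points demanding care are the reduction step, reading off $\rho(C)=0$ from the weight-$0$ hypothesis via the SHCP compatibility and deducing $Z^2=0$, and the parity bookkeeping that matches each $Z$-stable plane with $W$ rather than $\Pi W$. From the module-theoretic viewpoint the result is simply the statement that the self-injective Nakayama superalgebra $k[Z]/(Z^2)$ has, up to parity shift, only the trivial simple module and the free (projective--injective) module as indecomposables, so no higher-dimensional indecomposable weight-$0$ representation can occur.
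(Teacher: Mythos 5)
Your proposal is correct, and its core linear-algebra step differs from the paper's. The paper also reduces to the single fact $\rho(Z)^2=0$ (asserted with little comment), but then decomposes $V$ by viewing $Z$ as an \emph{ungraded} nilpotent matrix, invoking the Jordan canonical form to get a not-necessarily-homogeneous basis of pairs $w_i, z_i=Z(w_i)$ plus a basis of $\ker Z$, and finally taking homogeneous components of these vectors to produce the $1|1$-blocks and the trivial summand. You instead never leave the graded category: since $Z$ is odd, $\ker Z$ and $Z(V)$ are graded subspaces with $Z(V)\subseteq\ker Z$, and choosing homogeneous complements $V=M\oplus H\oplus Z(V)$ with $Z|_M:M\to Z(V)$ an isomorphism gives the blocks $\langle m_a, Zm_a\rangle$ and the trivial part $H$ directly. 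This buys you two things. First, rigor at the paper's weakest point: homogeneous components of a non-homogeneous basis need not be linearly independent (a single Jordan pair can split into two blocks), so the paper's closing claim that the resulting pieces give a direct sum decomposition requires an independence check that is simply automatic in your setup, where every vector in sight belongs to one of the complementary graded subspaces. Second, you make explicit the reduction the paper glosses over: $\rho(C)=0$ follows from the weight-$0$ hypothesis via the SHCP compatibility $d\pi=\rho|_{\fg_0}$, and then $[Z,Z]=-2C$ forces $\rho(Z)^2=0$. What the paper's route buys in exchange is brevity, by outsourcing the classification of nilpotent square-zero operators to the Jordan form. Your final remark identifying weight-$0$ representations with finite-dimensional modules over $k[Z]/(Z^2)$, whose only indecomposables up to parity shift are the trivial and the free module, is a clean conceptual summary, though not needed for the argument.
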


\begin{proof}
As before, the $S^1$-action commutes with the action of $Z$. So let us abuse notation and write $Z$ for $\rho(Z)$. We have $Z^2 = 0$. Let us view $Z$ as an ungraded matrix. Since the characteristic polynomial of $Z$ is quadratic, the Jordan canonical form of $Z$ implies there is a basis of $V$ (not necessarily homogeneous) in which the matrix of $Z$ has the block form
\begin{align*}
\Bigg( \begin{array}{cc}
0 & 0\\
0 & J \\
\end{array}
\Bigg)
\end{align*}
where $J$ is a block-diagonal matrix whose blocks are all of the form
\begin{align*}
J_2 = \begin{pmatrix}
0 & 1\\
0 & 0 
\end{pmatrix}.
\end{align*}
Hence we have a not-necessarily homogeneous basis $w_i, z_j$ of $V$ such that $Z(w_i) = z_i$, $i = 1, \dotsc k$, and $z_j$, $j = 1, \dotsc, l$ are a basis of $\ker(Z)$, for some $k$ and $l$, $k \leq l$. Taking homogeneous components of the $w_i$, we find $Z((w_i)_\alpha) = (z_i)_{\alpha +1}$ for each $i$. (Some of these homogeneous components may be zero, in which case we discard them).

Then each nonzero pair $(w_i)_\alpha, (z_i)_{\alpha + 1}$ is a basis of a $1|1$ dimensional subspace $W_i$ such that $Z((w_i)_\alpha) = (z_i)_{\alpha + 1}$ and $Z((z_i)_{\alpha + 1}) = 0$, i.e. $W_i$ is a representation of the type discussed above
(see (\ref{W-eq}) and  (\ref{piW-eq})). The nonzero homogeneous components of the remaining $z_j$  (those not in the image of $Z$) are a basis for a subspace $U$ of $\ker(Z)$. It is clear that $V = U 
\oplus \bigoplus_i W_i$ and that this splitting is $Z$-invariant, 
whence the theorem.
\end{proof}

\medskip

In order to complete the analysis of the irreducible finite dimensional complex representations of $S^{1|1}$ we recall the following definition from $\cite{cfk}$. For each integer $m\neq 0$,  define the  super weight space $V_m$ to be  a $1|1$ dimensional super vector space, 
with homogeneous basis $v_0, v_1$ on which the reduced group $S^{1}$ acts 
through the character $t \mapsto t^m$, so that infinitesimally
we have 
\begin{align*}
&Z \cdot v_0 = \sqrt{-i m}v_1,\qquad
&Z \cdot v_1 = \sqrt{-i m}v_0.
\end{align*}
It is easily checked that the super weight spaces $V_m$ are irreducible.

Combining the previous proposition with the semisimplicity result 
for super weight spaces with nonzero $m$,
(see \cite{cfk} Theorem 6.1), we have a complete structure theorem for all $S^{1|1}$ representations for which the reduced weight spaces are finite-dimensional, showing that all such representations decompose into a direct sum of super weight spaces and the subspace corresponding to the zero eigenvalue of $C$ whose structure is described in Prop.~\ref{prop:weight0}. 
More precisely, they split into a semisimple part (namely, $\bigoplus_i V_i \oplus U$) and a part which is a direct sum of the indecomposable, non-simple representations $\Ad(S^{1|1})$ and $\Pi \Ad(S^{1|1})$.

\begin{thm}\label{s11-thm}
Let $(\pi, \rho, V)$ be an $S^{1|1}$ representation such that the weight 
spaces $V_m := \{v \in V : t \cdot v = t^mv\}$ for the reduced group $S^1$ 
are all finite-dimensional. Then $(\pi, \rho, V)$ is isomorphic to 
$\bigoplus_i V_i \oplus U \oplus \bigoplus_j W_j$, where the $V_i$ 
are super weight spaces (repeated according to their multiplicities),
$U$ is a trivial representation, and each $W_j$ is isomorphic to 
$\Ad(S^{1|1})$ or $\Pi \Ad(S^{1|1})$. 
\end{thm}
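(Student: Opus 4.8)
The plan is to decompose the representation according to the spectral decomposition of the central element $C$, then treat the two resulting pieces separately using results already in hand. The key observation is that since $C$ is central in $\fg = \langle C, Z \rangle$ and commutes with the $S^1$-action, the operator $\rho(C)$ acts on $V$ as an $S^{1|1}$-equivariant endomorphism. Because each weight space $V_m$ is finite-dimensional and preserved by $\rho(C)$, I would first establish that $\rho(C)$ acts on $V_m$ with eigenvalues determined by the relation $[Z,Z] = -2C$: on a super weight space of weight $m \neq 0$, the square of $Z$ acts by the scalar $-im$ (up to the normalization in the definition of $V_m$), forcing $C$ to act by a nonzero scalar. Conversely, on the part where $C$ acts as zero, the weight must be zero. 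This gives the primary decomposition $V = V^{(0)} \oplus V^{(\neq 0)}$, where $V^{(0)} = \ker \rho(C)$ supports only weight-zero vectors and $V^{(\neq 0)}$ is the sum of generalized eigenspaces for the nonzero eigenvalues of $C$.

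Next I would analyze each summand. On $V^{(0)}$, the relation $[Z,Z]=-2C$ degenerates to $Z^2 = 0$, and moreover every vector has reduced weight zero, so $V^{(0)}$ is precisely a weight-zero representation in the sense of Proposition~\ref{prop:weight0}. Applying that proposition directly yields $V^{(0)} \cong U \oplus \bigoplus_j W_j$ with $U$ trivial and each $W_j$ isomorphic to $\Ad(S^{1|1})$ or $\Pi\Ad(S^{1|1})$. On $V^{(\neq 0)}$, I would invoke the semisimplicity result for super weight spaces with nonzero weight cited from \cite{cfk} Theorem~6.1: this guarantees that the restriction of the representation to the nonzero spectrum of $C$ is completely reducible, hence a direct sum of irreducible super weight spaces $V_i$. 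Assembling the two pieces produces the claimed isomorphism $V \cong \bigoplus_i V_i \oplus U \oplus \bigoplus_j W_j$.

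The step I expect to be the main obstacle is verifying cleanly that the primary decomposition by $\rho(C)$ is $S^{1|1}$-invariant as a whole, i.e. that it respects both the $S^1$-action and the $Z$-action simultaneously, and that the zero-eigenspace of $C$ contains \emph{only} weight-zero vectors. The first part follows because $\rho(C)$ is equivariant, so its generalized eigenspaces are subrepresentations; the delicate point is the correlation between the eigenvalue of $C$ and the reduced weight $m$. This must be pinned down from the structure of the Lie superalgebra action rather than assumed: one needs the identity relating $\rho(Z)^2$ to $\rho(C)$ together with the explicit action on $V_m$ to conclude that a nonzero weight forces a nonzero $C$-eigenvalue, and that weight zero forces $C$ to vanish. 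Once this correlation is secured, the two invocations of the earlier results slot in mechanically and the splitting is automatic.
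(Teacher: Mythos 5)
Your overall architecture is the one the paper itself uses: split $V$ into the weight-zero part and the nonzero-weight part, apply Proposition~\ref{prop:weight0} to the former, and invoke the semisimplicity result of \cite{cfk} (Theorem 6.1) for the latter; the paper's proof of Theorem~\ref{s11-thm} is literally this two-step combination, stated in the paragraph preceding the theorem. So the plan is sound and matches the source.

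However, the one step you single out as the main obstacle --- correlating the eigenvalue of $\rho(C)$ with the reduced weight $m$ --- is resolved backwards, and as written that step would fail. You propose to show that nonzero weight forces a nonzero $C$-eigenvalue by arguing that $\rho(Z)^2$ acts by $-im$ on weight-$m$ vectors and then using $[Z,Z]=-2C$. But in an arbitrary representation you have no independent access to how $\rho(Z)^2$ acts on $V_m$: the only constraint available is $\rho(Z)^2=-\rho(C)$, so the action of $Z^2$ is \emph{determined by} that of $C$, never the other way around. The scalar $-im$ for $Z^2$ is a feature of the explicitly constructed super weight spaces of \cite{cfk} --- precisely the objects the theorem is trying to locate inside $V$ --- so assuming it is circular. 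The correct resolution is simpler and is built into the definition of an SHCP representation $(\pi,\rho,V)$: the compatibility condition forces $\rho|_{\fg_0}=d\pi$. Since $C$ spans $\fg_0=\Lie(S^1)$, the operator $\rho(C)$ acts on the weight space $V_m$ by the scalar $im$ automatically. Consequently $\rho(C)$ is diagonalizable (no generalized eigenspaces are needed), its eigenspace decomposition coincides with the $S^1$-weight decomposition $V=\bigoplus_m V_m$ (which exists because $S^1$ is compact), $\ker\rho(C)$ is exactly $V_0$, and each $V_m$ is $\rho(Z)$-stable because $[C,Z]=0$ and $S^1$ is connected. With that substitution your two invocations of Proposition~\ref{prop:weight0} and of \cite{cfk} Theorem 6.1 go through exactly as in the paper.
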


As a corollary we are able to prove the Peter-Weyl theorem
regarding the matrix elements of $S^{1|1}$ representations.
If $\sigma:G \lra \rGL(m|n)$ is a representation of the compact
Lie supergroup $G$, we define the \textit{matrix element}
as the section in $\cO(G)$ corresponding, via the Chart theorem, to the
morphism 
$$
\begin{array}{cccc}
a_{ij}: & G(T) & \mapsto & \C^{1|1}(T) \\
& g & \mapsto & \sigma(g)_{ij}
\end{array}
$$
We have the following result (see \cite{cfk} Theorem 6.3).

\begin{thm}  {\bf The super Peter-Weyl theorem for $S^{1|1}$}.
The complex linear span of the matrix coefficients of the representations
$V_i$'s (see Theorem \ref{s11-thm}) 
and of the adjoint representation
is dense in $\cO(S^{1|1})\otimes \C$.
\end{thm}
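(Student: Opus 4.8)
The plan is to compute the Hopf superalgebra $\cO(S^{1|1})\otimes\C$ explicitly, identify the matrix coefficients of the representations listed in the theorem, and then show their span is dense by exhibiting a topological generating set. Recall that $S^{1|1}$ is the real form of $(\C^{1|1})^\times$ under the involution $\rho$ of~(\ref{inv-eq}). Its function superalgebra should therefore be computed as the fixed-point subalgebra of $\cO((\C^{1|1})^\times)\otimes\C$ under the induced $\C$-antilinear involution, in the spirit of Example~\ref{realform-ex11}. Concretely, using a functor-of-points coordinate $(t,\theta)$ on $S^{1|1}$ with $t\bar t = 1$ (so $t = e^{i\varphi}$ on the reduced circle) and $\theta$ odd, I expect $\cO(S^{1|1})\otimes\C$ to be topologically spanned by the monomials $t^m$, $t^m\theta$, $t^m\bar\theta$, and $t^m\theta\bar\theta$ for $m\in\Z$, with the even part being the usual smooth functions on $S^1$ tensored with the Grassmann algebra on $\theta,\bar\theta$.

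**First I would** record the matrix coefficients of each family of representations. For the super weight space $V_m$ (with $m\neq 0$), the prescribed $S^1$-action by $t\mapsto t^m$ together with the infinitesimal action of $Z$ by $\sqrt{-im}$ on the odd-even off-diagonal produces matrix elements that are $\C$-linear combinations of $t^m$, $t^m\theta$, $t^m\bar\theta$, and $t^m\theta\bar\theta$; this is the standard exponentiation of the SHCP data $(S^1,\langle C,Z\rangle)$ to a $\rGL(1|1)$-valued morphism. The key point is that as $m$ ranges over all nonzero integers, these matrix coefficients already produce every monomial $t^m$ (the even, purely bosonic direction) for $m\neq 0$ and, crucially, supply the odd monomials $t^m\theta$ and $t^m\bar\theta$ in each nonzero weight. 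What remains uncovered by the $V_m$ alone is the weight-zero data: the constant $1$ and the odd/top-degree pieces $\theta,\bar\theta,\theta\bar\theta$ at $m=0$.

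**Next I would** handle the weight-zero coefficients using the adjoint representation $\Ad(S^{1|1})$ and its parity reversal, which by Proposition~\ref{prop:weight0} and Theorem~\ref{s11-thm} exhaust the nonsemisimple weight-zero blocks. Since $S^1$ acts trivially on $\Ad(S^{1|1})$ and $Z$ acts nilpotently via~(\ref{W-eq})–(\ref{piW-eq}), its matrix coefficients are precisely the weight-zero sections $1$, $\theta$, $\bar\theta$, and the nilpotent off-diagonal entry contributing the $m=0$ odd generators; combined with $V_m$ these furnish a topological generating family for the whole algebra. The final step is the density argument proper: one invokes the classical Stone–Weierstrass / Fourier density statement on the reduced circle $S^1$ — the span of $\{t^m\}_{m\in\Z}$ being dense in $\cinfty(S^1)\otimes\C$ — and then observes that tensoring with the finite-dimensional Grassmann part on $\theta,\bar\theta$ preserves density, because the odd directions contribute only finitely many generators per weight and the Fréchet topology on $\cO(S^{1|1})\otimes\C$ is the product of the smooth topology in the even reduced variable with the (automatically closed) finite-dimensional odd factor.

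**The main obstacle** will be the bookkeeping that shows the matrix coefficients actually realize \emph{all} four monomial types $t^m,\,t^m\theta,\,t^m\bar\theta,\,t^m\theta\bar\theta$ in every weight, rather than only a proper subalgebra — in particular verifying that the top odd-degree term $t^m\theta\bar\theta$ is captured (it arises from the product structure in the matrix entries and from the $Z$-action rather than from the diagonal $S^1$-character), and confirming that no monomial is missed at weight zero. Once the generating family is pinned down, the density itself is a routine consequence of the ordinary Peter–Weyl/Fourier theorem for $S^1$ (cf.~\cite{cfk} Theorem~6.3), tensored against the finite-dimensional exterior algebra, so I expect the real content of the proof to lie entirely in the explicit identification of matrix coefficients, not in the analytic density estimate.
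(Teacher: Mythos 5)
Your overall strategy (identify $\cO(S^{1|1})\otimes\C$ explicitly, compute the matrix coefficients, then invoke Fourier density on the reduced circle) is the right one: it is the pattern the paper itself uses for the analogous theorem for $\rSU(1|1)$ in Section \ref{su-sec}, and note that for the $S^{1|1}$ statement the paper gives no proof at all, citing \cite{cfk}, Theorem 6.3 instead. However, your execution contains a genuine error: you have mis-identified the algebra $\cO(S^{1|1})\otimes\C$. The supergroup $S^{1|1}$ is the \emph{real form} of $(\C^{1|1})^\times$ cut out by the involution (\ref{inv-eq}); as such it has dimension $1|1$ (its Lie superalgebra $\langle C,Z\rangle$ has a single odd generator). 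You correctly impose the even fixed-point equation, which gives $t\bar{t}=1$, i.e.\ the reduced circle, but you omit the odd one: on the fixed locus the relation $\eta = i\bar{w}^{-2}\bar{\eta}$ forces $\bar{\theta}$ to be a unit even function times $\theta$, so $\bar{\theta}$ is \emph{not} an independent odd generator, and the top-degree monomials $t^m\theta\bar{\theta}$ vanish identically since $\theta^2=0$. The algebra you describe, $\cinfty(S^1)\otimes\C\otimes\wedge(\theta,\bar{\theta})$, is the function algebra of the supercircle with \emph{two} odd dimensions, i.e.\ of $\rSU(1|1)=S^{1|2}$, the subject of Section \ref{su-sec} --- not of $S^{1|1}$. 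Consequently your list of matrix coefficients for $V_m$ (including $t^m\bar{\theta}$ and $t^m\theta\bar{\theta}$) is wrong, and the ``main obstacle'' you single out (capturing the top odd-degree term) is vacuous: no such term exists in $\cO(S^{1|1})\otimes\C$.

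Once the algebra is corrected, the argument is shorter than what you outline. The algebra $\cO(S^{1|1})\otimes\C$ has topological basis $\{t^m,\, t^m\theta\}_{m\in\Z}$. Writing a $T$-point as $g=t(1+\theta Z)$ (the $1|1$ analogue of Lemma \ref{g-expr}), the representation $V_m$ ($m\neq 0$) sends $g\mapsto t^m\left(I+\theta\rho_m(Z)\right)$, so its matrix coefficients span exactly $\{t^m,\, t^m\theta\}$; the adjoint representation satisfies $\Ad(g)C=C$ and $\Ad(g)Z=Z-2\theta C$, so its matrix coefficients supply the missing weight-zero sections $1$ and $\theta$; density then follows from the classical Fourier theorem on $S^1$, applied separately to the even component and to the coefficient of $\theta$. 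Your closing remark about the topology (smooth topology in the reduced variable times a finite-dimensional odd factor) is correct and is all that is needed --- but it must be run over the correct, $1|1$-dimensional, algebra.
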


\section{The supergroup $\rSU(1|1)$, its
representations and the Peter-Weyl theorem}\label{su-sec}

In this section we study the special unitary supergroup $\rSU(1|1)$
(see \cite{fi} for
its definition and main properties). Since its reduced group is still $S^1$, 
as in the case of $S^{1|1}$ studied in the previous section, we
also may refer to $\rSU(1|1)$ as $S^{1|2}$, that is the supercircle in
two odd dimensions.
We want to view  $\rSU(1|1)$ 
as a real form of the special linear supergroup 
$\rSL(1|1) \cong (\C^{1|2})^\times$
and we shall achieve this, by providing an involution $\sigma$
of $\rSL(1|1)_\R(T)$ functorial in $T \in \smflds_\R$ as we did in
Example \ref{realform-ex11}. 

\medskip
We define, following \cite{fi}, $\sigma: \rSL(1|1)_\R(T)
\lra \rSL(1|1)_\R(T)$
$$
\sigma
\begin{pmatrix} a & \beta \\ \gamma  & d \end{pmatrix} \, = \,
\begin{pmatrix} \dbar^{-1} & -i\abar^{-2}\gabar \\ 
-i\abar^{-2}\betabar  & \abar^{-1}\end{pmatrix}
$$
where $\Ber\begin{pmatrix} a & \beta \\ \gamma  & d \end{pmatrix} =d^{-1}(a-\beta d^{-1} \alpha)=1$.\\
This yields immediately the
supergroup:
$$
\rSU(1|1)(T)=\left\{
\begin{pmatrix} a & \beta \\ -i\betabar a^2 & \abar^{-1} \end{pmatrix}
\, | \, a \abar (1+i \beta \betabar)=1 \right\}
$$
Notice that the relation $a \abar (1+i \beta \betabar)=1$ is effectively
the condition of berezinian equal to $1$.

$\rSU(1|1)$ has dimension $1|2$ and its Lie superalgebra is:
$$
\begin{array}{rl}
\rsu(1|1)&=\left\{
\begin{pmatrix} ix & z \\ -i\zbar & ix \end{pmatrix}\right\} \\ \\
&=\Span\left\{ \, C=\begin{pmatrix} i & 0 \\ 0& i \end{pmatrix}, \,
U=\begin{pmatrix} 0 & 1 \\ -i& 0 \end{pmatrix}, \,
S=\begin{pmatrix} 0 & i \\ -1& 0 \end{pmatrix} \,
\right\}
\end{array}
$$
The commutation relations in $\rsu(1|1)$ are easily seen to be
\begin{align}
\label{eq:commrel}
[C,U]=[C,S]=[U,S]=0, \quad [U,U]=[S,S]=-2C
\end{align}
We refer to \cite{vsv2} pg 112 for more details.
In this work, Varadarajan classifies all 
real forms of $\rsl(m|n)=\Lie(\rSL(m|n))$ and shows 
they are all of the type $\rsu(p,q|r,s)_\pm$.
The $\pm$ refer to the fact, that for each $(p,q|r,s)$, $m=p+q$,
$n=r+s$, we have
two different non isomorphic real forms $\rsu(p,q|r,s)_\pm$, called 
\textit{isomers}, once we consider
isomorphisms fixing a given real form of the even part.
What is surprising is the fact 
that in the physical applications,
these two non isomorphic real forms will give rise to different  
physical fields, which however are associated to the same physics.
Thus for such applications it is irrelevant which form one actually chooses.
For more details see \cite{fl} Ch. 4.

\medskip
As far as we are concerned, we are considering the isomer $\rsu(1|1)_+$
in Varadarajan's notation. As for the isomer $\rsu(1|1)_-$, the corresponding
special unitary supergroup is 
$$
\rSU(1|1)_-(T)=\left\{
\begin{pmatrix} a & i\beta \\ \betabar a^2 & \abar^{-1} \end{pmatrix}
\, | \, a \abar (1-i \beta \betabar)=1 \right\}
$$

\medskip
We are now interested to the theory of representations of such
compact supergroups. For
clarity of exposition we shall discuss just $\rSU(1|1)$, with
Lie superalgebra $\rsu(1|1)=\rsu(1|1)_+$ leaving to
the reader the easy modifications to obtain the representations of
$\rSU(1|1)_-$.
To study such representations we shall resort to the theory
of Super Harish-Chandra pairs (see \cite{ccf}, \cite{cfk}).
We start by noticing that, similarly to the situation of $S^{1|1}$,
(see Sec. \ref{s11-sec} and \cite{cfk}), the reduced
part of $\rSU(1|1)$ is $\rSU(1|1)_{\red}=S^1$, hence its irreducible
representations are all one dimensional and parametrized by
the integers.

In analogy with the $S^{1|1}$ case, we introduce the following super weights spaces. Let $m$ be a nonzero integer (the case $m=0$ being similar 
to the $S^{1|1}$ setting).
Let  $\pi_m^{\pm}$ be the representation of $\rSU(1|1)$   acting in $W^\pm_m\simeq \C^{1|1}$, whose differentiated action is defined
(with a slight abuse of notation) according to
\begin{align*}
C= \left(
\begin{array}{cc}
im & 0\\
0 & im
\end{array}
\right)\, \quad 
U= \left(
\begin{array}{cc}
0 & \sqrt{-im}\\
\sqrt{-im} & 0
\end{array}
\right)\,\quad
S= \left(
\begin{array}{cc}
0 & \frac{\mp m}{\sqrt{-im}}\\
\frac{\pm m}{\sqrt{-im}} & 0
\end{array}
\right)
\end{align*}
\begin{observation}
The representations $\pi^+_m$ and 
$\pi^-_k$ are inequivalent for each nonzero integer $m,k$.
The case $m \neq k$ is obvious, so we only need consider $m = k$. Suppose $F: \pi^+_m \to \pi^-_m$ is an isomorphism of representations. Let $u^+, v^+$ (resp. $u^-, v^-$) be homogeneous bases of $\pi^+_m$ (resp. $\pi^-_m$) such that $C, S, U$ act by the above matrices. We abuse notation by denoting the matrix of $F$ with respect to these bases by $F$. Since the transformation $F$ is even, the matrix $F$ has the form $F=\mathrm{diag}(a,b)$,
for some invertible scalars $a, b$. One sees by direct calculation that the fact that $F$ intertwines the actions of $S$ implies $a = -b$, but the fact that $F$ intertwines the actions of $U$ implies $a = b$, whence $a = b = 0$. This contradicts the assumption that $F$ is an isomorphism.

\end{observation}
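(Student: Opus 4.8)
The plan is to split into the two cases already flagged in the statement. When $m \neq k$ the argument is immediate: an isomorphism of $\rSU(1|1)$-representations restricts to an isomorphism of the underlying $S^1$-representations, and on $\pi^+_m$ (resp.\ $\pi^-_k$) the reduced group $S^1$ acts through the character $t \mapsto t^m$ (resp.\ $t \mapsto t^k$), coming from $C$ acting as the scalar $im$ (resp.\ $ik$). Since distinct integer characters of $S^1$ yield non-isomorphic one-dimensional pieces, no intertwiner $F$ can exist unless $m = k$. Thus the entire content is in the case $m = k$, which is what I would treat carefully.

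For $m = k$, I would fix homogeneous bases $u^+, v^+$ and $u^-, v^-$ (with $u$ even and $v$ odd) in which $C, U, S$ act by the displayed matrices; note that the off-diagonal form of the matrices for $U$ and $S$ is exactly what one expects, since $U$ and $S$ are odd generators and hence swap parity. Any morphism of representations in the SHCP category is by definition even, and an even endomorphism of a $1|1$-dimensional space is diagonal with respect to homogeneous bases; hence the matrix of $F$ has the form $\mathrm{diag}(a,b)$, and invertibility of $F$ forces $a, b \neq 0$. The conditions that $F$ be a morphism are that it intertwine the actions of $C$, $U$, and $S$, and I would verify these generator by generator.

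The intertwining with $C$ is automatic, since $C$ acts as the same scalar $im$ in both representations. The generator $U$ acts by the \emph{same} off-diagonal matrix in $\pi^+_m$ and $\pi^-_m$, so the relation $F\,U = U\,F$ together with $F = \mathrm{diag}(a,b)$ forces $a = b$. The generator $S$, however, acts by matrices that differ precisely by a sign in their off-diagonal entries, so $F\,S^+ = S^-\,F$ forces $a = -b$. Combining these two relations yields $a = b = 0$, contradicting the invertibility of $F$; hence no isomorphism $\pi^+_m \to \pi^-_m$ can exist.

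I do not expect a genuine obstacle here: once the diagonal form of $F$ is in hand, the remainder is a one-line $2 \times 2$ computation. The only point requiring care is the justification that $F$ may be taken diagonal, i.e.\ that the relevant notion of isomorphism is through even maps; this is exactly where the parity grading does the work, since it is the interplay between the parity-even agreement of the $U$-action and the parity-odd sign discrepancy of the $S$-action that produces the contradiction.
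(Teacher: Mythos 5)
Your proof is correct and follows essentially the same route as the paper: reduce to $m=k$, note that an even isomorphism of $1|1$-dimensional spaces is $\mathrm{diag}(a,b)$, and derive the contradiction from $U$ forcing $a=b$ while $S$ forces $a=-b$. The only additions are your explicit justification of the ``obvious'' case $m\neq k$ via the $S^1$-characters and the remark that intertwining with $C$ is automatic, both of which the paper leaves implicit.
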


\medskip\noindent
We have the following result.

\begin{theorem}\label{su-rep}
Let $(\pi, \rho, V)$ be an $\rSU(1|1)$ representation such that the 
weight spaces $V_m := \{v \in V : t \cdot v = t^mv\}$ for the reduced 
group $S^1$ are all finite-dimensional. Then $(\pi, \rho, V)$ is isomorphic 
to $V_0\bigoplus_m W_m^+ \bigoplus_k W_k^-$, where the $W^\pm_m$ 
are super weight spaces (repeated according to their multiplicities).
\end{theorem}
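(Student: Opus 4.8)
The plan is to reduce everything to a single reduced weight space and then decompose the action of the two odd generators there. First I would invoke the representation theory of the compact reduced group to write $V=\bigoplus_m V_m$ as the (algebraic) direct sum of its finite-dimensional weight spaces $V_m$. The structural input that makes this useful is that $C$ is central in $\rsu(1|1)$ by (\ref{eq:commrel}): writing $C,U,S$ for $\rho(C),\rho(U),\rho(S)$, the odd operators $U,S$ commute with $C$, and since the reduced group $S^1=\{e^{i\theta}\id\}$ is generated by $\exp(\theta C)$ with $\ad C=0$, the adjoint action $\Ad(S^1)$ on $\fg$ is trivial. Hence by SHCP-compatibility $U$ and $S$ commute with the $S^1$-action and preserve each $V_m$, so every $V_m$ is a subrepresentation and it suffices to treat them one at a time. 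On $V_m$ the central $C$ acts by the scalar $im$, and $[U,U]=[S,S]=-2C$, $[U,S]=0$ give $U^2=S^2=-im\cdot\id$ and $US=-SU$. The case $m=0$ needs no further work: $V_0$ is simply kept whole as the first summand in the statement.

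For $m\neq 0$ the scalar $im$ is invertible, so $U$ and $S$ are invertible odd operators; intuitively the algebra they generate is a nondegenerate Clifford algebra over $\C$, hence semisimple, which is exactly why (in contrast with the weight-zero block) no indecomposable non-simple pieces survive. To make this concrete, the key step is to introduce the \emph{even} operator $A:=US$. From $US=-SU$ and $U^2=S^2=-im$ one computes $A^2=USUS=-U^2S^2=m^2\,\id$, so $A$ is diagonalizable with eigenvalues $\pm m$, and being even it preserves the grading; I would accordingly split the even part $(V_m)_0=P^+\oplus P^-$ into the $\pm m$-eigenspaces of $A|_{(V_m)_0}$. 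A one-line calculation shows $U$ anticommutes with $A$ (indeed $U(US)=U^2S=-im\,S$ while $(US)U=-U^2S=im\,S$), matching the explicit data of the irreducibles, where one checks $US=\diag(m,-m)$ on $W_m^+$ and $\diag(-m,m)$ on $W_m^-$.

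Then for each even eigenvector $v\in P^{\pm}$ I would set $e_0=v$ and $e_1=(-im)^{-1/2}Uv$, and verify directly that $\Span\{e_0,e_1\}$ is a subrepresentation realizing $W_m^+$ (for $v\in P^+$) or $W_m^-$ (for $v\in P^-$): one has $Ue_0=\sqrt{-im}\,e_1$ and $Ue_1=(-im)^{-1/2}U^2v=\sqrt{-im}\,e_0$ from $U^2=-im$, while recovering $Sv=\pm i\,Uv$ from $USv=\pm mv$ (using $U^{-1}=\tfrac{i}{m}U$) reproduces precisely the stated matrices for $S$. Because $U$ is an isomorphism $(V_m)_0\to(V_m)_1$, the odd parts $U(P^+)$ and $U(P^-)$ of these subrepresentations together exhaust $(V_m)_1$ while their even parts exhaust $(V_m)_0=P^+\oplus P^-$; a dimension count gives $V_m=\bigoplus W_m^+\oplus\bigoplus W_m^-$ with multiplicities $\dim P^+$ and $\dim P^-$, which are well defined since $\pi_m^+\not\cong\pi_m^-$ (the Observation). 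Summing over all $m$ yields the asserted isomorphism. I do not expect a deep obstacle here: the genuine insight is spotting that $A=US$ separates the two isomers, after which the argument is routine. The only delicate points are bookkeeping—pinning down the normalization $(-im)^{-1/2}$ so that the $U$- and $S$-actions match the defining matrices exactly—and justifying the global decomposition $V=\bigoplus_m V_m$ from the finite-dimensionality hypothesis on the weight spaces.
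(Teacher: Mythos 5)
Your proof is correct and follows essentially the same route as the paper: decompose $V$ into weight spaces via compactness of $S^1$, use the relations $U^2=S^2=-im$, $US=-SU$ to diagonalize the even operator $US$ on the even part of $V_m$, and pair each eigenvector $v$ with its (normalized) image $Uv$ to produce the $1|1$-dimensional subrepresentations $W_m^{\pm}$. Your only deviation is cosmetic—you obtain the exhaustion of $V_m$ from the fact that $U$ is an odd isomorphism $(V_m)_0\to(V_m)_1$, whereas the paper first diagonalizes $U$ itself to exhibit an $n|n$ homogeneous basis—so the two arguments are the same in substance.
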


\begin{proof}
Since $S^1$ is compact we have
\[
V=\bigoplus_{m\in \Z} V_m
\] 
where $m$ are the integers parametrizing the characters 
$\theta\mapsto e^{i m\theta}$. Notice that the isotypic spaces 
$V_m$ are the eigenspaces of the operator $C$.

It follows from the commutation relations~\eqref{eq:commrel} 
that $U$ and $S$ preserve the isotypic decomposition of $V$. 
  Hence, from now on, we can assume 
$V=V_m$, for some nonzero $m\in \Z$. 

We first notice that,  using the commutation relations \eqref{eq:commrel}, 
\[
U^2  =S^2=-im
\]
Hence both $U$ and $S$ are diagonalizable with nonzero eigenvalues. 
Let $w$ be an eigenvector of $U$ with (nonzero) eigenvalue $\lambda$
$(=\pm \sqrt{-im})$.
Then, if $w=w_0+w_1$ denotes its decomposition into homogeneous components, 
we must have
\begin{equation} \label{Urel-eq}
U w_0 =\lambda w_1 \qquad U w_1 =\lambda w_0
\end{equation}
Applying this fact to a basis of eigenvectors of $U$,  $u_j=z_j+\zeta_j$,
$z_j$ even and $\zeta_j$ odd, 
we obtain an homogeneous basis of $V$
\[
z_1,\ldots,z_n|\zeta_1,\ldots \zeta_n
\]
Hence $V$ has dimension $n|n$, for a suitable $n$, corresponding
to the multiplicity of the $V_m$ representation.

We now notice that
\begin{align*}
(US)^2 & = - C^2=m^2 \id
\end{align*}
for some nonzero $m$. Hence $US$ is diagonalizable. Moreover, since it 
is even, we can assume that its eigenvectors are homogeneous. 
Let $\{f_j\}$   denote a basis of such eigenvectors for the even   
part of $V$ with eigenvalues \(\lambda\in\{\pm m\}\). 
We want to show that: 
$U f_j$
is an odd eigenvector for $US$, so that the subspace 
\[
W={\rm span}_\C \{
f_j, Uf_j
\}
\]
is $\rsu(1|1)$-stable.
Since (again from\eqref{eq:commrel}) \( U^2=-im\), we have that 
$U f_j$ is nonzero. Moreover
\[
(US)U f_j= - U (USf_j)
\]
hence $U f_j$ is an odd eigenvector for $US$. The claim that 
the subspace $W= {\rm span}_\C \{f_j, Uf_j
\}$ is invariant follows again from \eqref{eq:commrel} and 
in particular from :
\begin{equation} \label{U-eq}
U^2  =S^2=-im,\qquad 
(US)^2= -C^2
\end{equation}
and 
\[
U^2 S f_j = \lambda U f_j\quad\Leftrightarrow\quad 
-C S f_j =\lambda U f_j \quad\Leftrightarrow\quad -i\,m 
Sf_j =\lambda U f_j 
\]
which gives
\[
Sf_j =\frac{i \lambda}{m} U f_j
\]
Using the basis for $W$:
\[
f_j, \qquad \phi_j=\frac{Uf_j}{\sqrt{-im}},
\]
the explicit action of $\rsu(1|1)$ is obtained as follows.
The action of $C$ is 
\begin{align*}
C(f_j)=imf_j, \qquad C(\phi_j)=im\phi_j
\end{align*}
The matrix representing $C$ in the given basis is then:
\begin{align*}
C= \left(
\begin{array}{cc}
im & 0\\
0 & im
\end{array}
\right)
\end{align*}
The action of $U$ is given by
\begin{align*}
U (f_j) & =\sqrt{-im}\, \frac{Uf_j}{\sqrt{-im}} 
\,=\, \sqrt{-im}\, \phi_j\\
U (\phi_j) & = \frac{U^2 f}{\sqrt{-im}}= 
\frac{-im}{\sqrt{-im}}f_j= \sqrt{-im}f_j
\end{align*}
hence
\[
U = \left(
\begin{array}{cc}
0 & \sqrt{-im}\\
\sqrt{-im} & 0
\end{array}
\right)
\]
The action of $S$ is given by
\begin{align*}
S(f_j) & =\frac{\lambda}{\sqrt{-im}}\,\phi_j \\
S(\phi_j) & =- \frac{ U S f_j}{\sqrt{-im}}= -\frac{\lambda}{\sqrt{-im}}f_j
\end{align*}
hence
\[
S = \left(
\begin{array}{cc}
0 & -\frac{\lambda}{\sqrt{-im}}\\
\frac{\lambda}{\sqrt{-im}} & 0
\end{array}
\right)
\]
with $\lambda \in \{\pm m\}$.
\end{proof}

Next we want to determine the matrix elements for such
representations and prove the Peter-Weyl theorem.

\begin{lemma}\label{g-expr}
We can express uniquely any $T$-point $g \in \rSU(1|1)(T)$ as
$$
g=\mathrm{diag}(t,\tbar^{-1})(1+\theta U)(1+\eta S), \qquad 
$$
where $t,
\tbar \in \cO(T)_0 \otimes \C$, 
$\theta,\eta \in \cO(T)_1 \otimes \C$.
\end{lemma}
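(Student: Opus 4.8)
The plan is to work entirely with the functor of points and to prove the factorization by an explicit inversion. First I would compute the right-hand side as a supermatrix, match it entry-by-entry against the general element $\begin{pmatrix} a & \beta \\ -i\betabar a^2 & \abar^{-1} \end{pmatrix}$ of $\rSU(1|1)(T)$, and then solve for the parameters. Note that $\diag(t,\tbar^{-1})$ has Berezinian $t\tbar$, whereas a short calculation gives $\Ber(1+\theta U)=\Ber(1+\eta S)=1$ (indeed $(\theta U)^2=-\theta^2U^2=0$, so $1+\theta U$ is the odd one-parameter subgroup generated by $U\in\rsu(1|1)$, and similarly for $S$); hence the product has Berezinian $t\tbar$, and membership in the group will force $t\tbar=1$, i.e. $\tbar=t^{-1}$.

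Next I would carry out the multiplication using the explicit matrices for $U$ and $S$, obtaining
\begin{align*}
\diag(t,\tbar^{-1})(1+\theta U)(1+\eta S)=\begin{pmatrix} t(1-\theta\eta) & t(\theta+i\eta) \\ -\tbar^{-1}(\eta+i\theta) & \tbar^{-1}(1+\theta\eta) \end{pmatrix}.
\end{align*}
Comparison with the general element yields $a=t(1-\theta\eta)$, $\beta=t(\theta+i\eta)$, $\abar^{-1}=\tbar^{-1}(1+\theta\eta)$, together with the $(2,1)$-relation. Since $\theta\eta$ is even and nilpotent with $(\theta\eta)^2=0$, the first relation inverts to $t=a(1+\theta\eta)$, and $a$ is invertible because its reduction lies in $S^1\subset\C^\times$; this fixes the body of $t$ and leaves only a nilpotent correction undetermined.

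The odd parameters I would then extract from the $(1,2)$ and $(2,1)$ entries, which read $\theta+i\eta=t^{-1}\beta$ and $i\theta+\eta=i\,\tbar\,\betabar a^2$. The coefficient matrix $\left(\begin{smallmatrix}1 & i \\ i & 1\end{smallmatrix}\right)$ has determinant $2\neq0$, so $\theta,\eta$ are determined uniquely in terms of $t,\tbar,\beta,\betabar$. Substituting these back into $t=a(1+\theta\eta)$ gives a single equation for the even parameter whose only non-body contribution is the nilpotent term $a\,\theta\eta$; as odd elements of $\cO(T)$ are nilpotent, this is solved by a terminating iteration, fixing $t$ (hence $\theta,\eta$) uniquely, with $\tbar=\sigma(t)$. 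Finally I would confirm consistency with the defining relation: computing $a\abar=t\tbar(1-2\theta\eta)$ and $1+i\beta\betabar=1+2t\tbar\,\theta\eta$ shows that $a\abar(1+i\beta\betabar)=1$ is equivalent to $t\tbar=1$, precisely the condition placing $\diag(t,\tbar^{-1})$ in the torus of $\rSU(1|1)$.

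The step I expect to be the main obstacle is the entanglement of the even parameter $t$ with the odd product $\theta\eta$: the even–even entries pin down $t$ only up to the correction $a\,\theta\eta$, while $\theta,\eta$ themselves depend on $t$, so existence and uniqueness must be read off a coupled even/odd system rather than directly. The clean resolution is that the Berezinian relation $a\abar(1+i\beta\betabar)=1$ forces $t\tbar=1$, which is what makes the apparently over-determined set of four entry-equations collapse to a genuinely invertible system; once this is checked, each parameter is forced at every step, giving both existence and uniqueness of the factorization.
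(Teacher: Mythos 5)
Your proposal is correct, and its skeleton is the same as the paper's: expand $\diag(t,\tbar^{-1})(1+\theta U)(1+\eta S)$ explicitly and match entries against a general $T$-point of $\rSU(1|1)$. Where you differ is in how the resulting system is solved. The paper simply exhibits the closed-form solution $t=a(1-\tfrac{i}{2}\beta\betabar)$, $\theta=\tfrac{1}{2}(\betabar a+\beta\abar)$, $\eta=\tfrac{1}{2}(\betabar a-\beta\abar)$ and calls the verification a direct calculation; you instead argue structurally: multiplicativity of $\Ber$ forces $t\tbar=1$, the odd entries give a linear system with invertible coefficient matrix $\left(\begin{smallmatrix}1&i\\ i&1\end{smallmatrix}\right)$, and the $(1,1)$ entry pins down $t$ up to a nilpotent correction removed by a terminating iteration. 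Your version separates existence from uniqueness more cleanly and makes explicit why the coupled even/odd system is solvable at all, while the paper's version produces the explicit parameters that are then reused in the proof of the Peter--Weyl theorem; ideally one would record both. Two details: your intermediate identity $1+i\beta\betabar=1+2t\tbar\,\theta\eta$ should read $1+i\beta\betabar=1+2(t\tbar)^{-1}\theta\eta$ (one computes $\beta\betabar=-2i(t\tbar)^{-1}\theta\eta$ from the $(1,2)$ and $(2,1)$ entries), though the equivalence with $t\tbar=1$ is unaffected; and note that the paper's own proof matches against $\begin{pmatrix} a & i\beta\\ \betabar a^{2} & \abar^{-1}\end{pmatrix}$, i.e.\ the form of $\rSU(1|1)_-$, whereas you matched against the paper's stated definition of $\rSU(1|1)$ --- on this point your write-up is the more internally consistent of the two.
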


\begin{proof}
We need to show that there
are unique $t$, $\theta$, $\eta$, such that
$$
\begin{pmatrix} a & i\beta \\ \betabar a^2 & \abar^{-1} \end{pmatrix}=
\begin{pmatrix} t & 0
\\ 0 & \tbar^{-1}\end{pmatrix}
\begin{pmatrix} 1 & \theta \\ -i\theta & 1\end{pmatrix}
\begin{pmatrix} 1& i\eta \\ -\eta & 1\end{pmatrix}
$$
A direct calculation shows that these are:
$$
t=a(1-\frac{i}{2}\beta\betabar), \quad
\theta= \frac{1}{2}(\betabar a + \beta \abar), \quad
\eta= \frac{1}{2}(\betabar a - \beta \abar).
$$
\end{proof}

\medskip

We finally define the adjoint representation ${\rm Ad}(\rSU(1|1))$ as the representation acting on $\C^{1|2}$ according to 
\[
C=\left(
\begin{array}{ccc}
0 & 0 & 0\\
0 & 0 & 0\\
0 & 0 & 0
\end{array}
\right)
\quad
U=\left(
\begin{array}{ccc}
0 & 1 & 0\\
0 & 0 & 0\\
0 & 0 & 0
\end{array}
\right)
\quad
S=\left(
\begin{array}{ccc}
0 & 0 & 1\\
0 & 0 & 0\\
0 & 0 & 0
\end{array}
\right)
\]

We can now state and prove the Peter-Weyl Theorem for $\rSU(1|1)$.

\begin{theorem} {\bf The super Peter-Weyl theorem for $S^{1|2}$}.
The complex linear span of the matrix coefficients of the 
$\rSU(1|1)$ representations $\{(\pi_m^+)\}_{m\in \Z}$ and of the adjoint representation 
is dense in $\cO(\rSU(1|1))\otimes \C$.
\end{theorem}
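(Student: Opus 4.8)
The plan is to prove density by exhibiting an explicit set of generators for the function algebra $\cO(\rSU(1|1)) \otimes \C$ and then showing that each generator lies in the closed span of the listed matrix coefficients. The natural starting point is the global coordinate description afforded by Lemma~\ref{g-expr}: every $T$-point $g$ factors uniquely as $\mathrm{diag}(t,\tbar^{-1})(1+\theta U)(1+\eta S)$, so $\cO(\rSU(1|1))\otimes\C$ is generated, as a topological algebra, by the even coordinate $t$ (together with $\tbar$, subject to the compactness relation $t\tbar=1$ modulo nilpotents) and the two odd coordinates $\theta,\eta$. Concretely, a general section is a finite sum of terms $f(t)\cdot 1,\ f(t)\theta,\ f(t)\eta,\ f(t)\theta\eta$ with $f$ a function on the reduced circle $S^1$, and classical Peter-Weyl (Stone--Weierstrass) for $S^1$ says the $f(t)$ are uniformly approximable by Laurent polynomials in $t$, i.e. by the characters $t\mapsto t^m$.

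The key computational step is to read off the matrix coefficients $a_{ij}$ of $\pi_m^+$ against this coordinatization. First I would compute the group element $\pi_m^+(g)$ explicitly by exponentiating the odd directions: since $\theta,\eta$ are nilpotent, $\pi_m^+(1+\theta U)(1+\eta S)$ is a genuine finite expansion, and the diagonal factor contributes $\mathrm{diag}(t^m,\ldots)$ through the action of $C$ by $im$. Using the matrices for $C,U,S$ in the statement of Theorem~\ref{su-rep}, this yields the four entries of $\pi_m^+(g)$ as explicit expressions of the form $t^m(\alpha + \beta\theta + \gamma\eta + \delta\theta\eta)$ for scalar constants depending on $m$. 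The point is that, as $m$ ranges over $\Z$, the span of these entries recovers every monomial $t^m$, $t^m\theta$, $t^m\eta$, and $t^m\theta\eta$ with $m\neq 0$; the adjoint representation is then brought in precisely to supply the missing $m=0$ pieces, since its off-diagonal matrix coefficients (corresponding to the $U$ and $S$ directions, acting trivially on the reduced group) produce $\theta$ and $\eta$ without any power of $t$, exactly the weight-zero odd generators not seen by the $\pi_m^+$.

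I would then assemble the argument: the complex span of all $a_{ij}(\pi_m^+)$ for $m\in\Z$ together with the adjoint matrix coefficients contains every monomial $t^m\theta^{\varepsilon_1}\eta^{\varepsilon_2}$ with $\varepsilon_i\in\{0,1\}$, for all $m$; taking uniform (Fr\'echet) closure and invoking Stone--Weierstrass on the even part upgrades this to density in $\cO(\rSU(1|1))\otimes\C$. The main obstacle I anticipate is bookkeeping rather than conceptual: one must verify that the scalar coefficients $\alpha,\beta,\gamma,\delta$ arising from $\pi_m^+$ are genuinely nonzero (so that the odd generators are actually produced and not killed by a vanishing constant), and one must confirm that $\pi_m^+$ alone indeed fails to generate the $m=0$ odd functions, which is what forces the adjoint representation into the statement. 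Checking that the $\pi_m^-$ contribute nothing new beyond the $\pi_m^+$ — so that omitting them costs no density — is the one place where the interplay of the two isomers needs a careful line, but it should follow from the explicit entries since $\pi_m^-$ differs from $\pi_m^+$ only in the sign of the $S$-block, which does not enlarge the spanned monomials.
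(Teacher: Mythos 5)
Your route is the same as the paper's: coordinatize $\rSU(1|1)$ by Lemma~\ref{g-expr}, compute the entries of $\pi_m^+(g)$ explicitly, and reduce density to classical Fourier approximation on the reduced circle. The bookkeeping you describe is also correct as far as it goes: for $m\neq 0$ the four entries of $\pi_m^+(g)$ are $t^m(1\pm m\theta\eta)$ and $\sqrt{-im}\,t^m(\theta\mp i\eta)$, which span exactly $t^m$, $t^m\theta$, $t^m\eta$, $t^m\theta\eta$; and the adjoint representation contributes $1$, $\theta$, $\eta$.

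The gap is in your final assembly, where you claim the span contains every monomial $t^m\theta^{\varepsilon_1}\eta^{\varepsilon_2}$ \emph{for all} $m$: nothing on the list produces $\theta\eta$ (the case $m=0$, $\varepsilon_1=\varepsilon_2=1$), and in fact $\theta\eta$ does \emph{not} lie in the closed span, so the argument cannot be completed as written. First, the adjoint representation misses it: since $\rho(U)\rho(S)=E_{12}E_{13}=0$ in the paper's matrices, one has $\Ad(g)=(1+\theta E_{12})(1+\eta E_{13})=1+\theta E_{12}+\eta E_{13}$, so its only nonzero coefficients are $1$, $\theta$, $\eta$ --- you checked that the $\pi_m^+$ miss the weight-zero \emph{odd} monomials, but not that the adjoint also misses the weight-zero \emph{even} monomial $\theta\eta$. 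Second, the coefficients $t^m\theta\eta$ with $m\neq 0$ cannot approximate $\theta\eta$ in the closure: writing every section uniquely in your normal form $h=h_0(t)+h_1(t)\theta+h_2(t)\eta+h_3(t)\theta\eta$, the functional $\Phi(h)=\int_{S^1}h_3$ is continuous in the Fr\'echet topology, vanishes on every listed matrix coefficient (the components $h_3$ that occur are either $0$ or multiples of $t^m$ with $m\neq 0$, hence of mean zero), yet $\Phi(\theta\eta)=2\pi\neq 0$; so the closed span lies in the proper hyperplane $\ker\Phi$. You should be aware that the paper's own proof conceals exactly the same lacuna in its concluding sentence (``It is then clear that\dots''), so you have faithfully reproduced the paper's argument together with its defect; to get a true statement one must adjoin a weight-zero representation with $\rho(U)\rho(S)\neq 0$ --- for instance $\Ad\otimes\Ad$, or the $2|2$-dimensional representation on the exterior algebra on two odd generators with $U$ and $S$ acting by the two exterior multiplications --- whose matrix coefficients do contain $\theta\eta$. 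Your remaining remarks (nonvanishing of the scalar constants, and that the $\pi_m^-$ add nothing new) are correct but do not bear on this point.
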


\begin{proof}
Let $r_m$ is the irreducible representation described in \ref{su-rep}.
We can view $r_m:\rSU(1|1) \lra \rGL(1|1)$ or alternatively (as
in \ref{su-rep}) as a pair $(r_m^0, \rho_m)$, where $\rho_m$ is a representation
of the Lie superalgebra $\rsu(1|1)$. We
observe that $r_m(I+\theta U)=I+\theta \rho_m(U)$ and
$r_m(I+\eta S)=I+\eta \rho_m(S)$.  Hence we can write using Lemma \ref{g-expr}
for $g \in \rSU(1|1)(T)$:
$$
\begin{array}{rl}
r_m(g)&=r_m\left(\mathrm{diag}(t,\tbar^{-1})(1+\theta U)(1+\eta S)
\right)=\\ \\
&= r_m\begin{pmatrix} t & 0
\\ 0 & \tbar^{-1}\end{pmatrix}\left(I+\theta \rho_m(U)\right)
\left(I+\eta \rho_m(S)\right)=\\ \\
&=\begin{pmatrix} e^{imt} & 0
\\ 0 & e^{imt} \end{pmatrix}\begin{pmatrix} 
1&\sqrt{-im}\theta \\
\sqrt{-im}\theta & 1\end{pmatrix} 
\begin{pmatrix} 1&-i\sqrt{-im}\theta \\
i\sqrt{-im}\theta & 1\end{pmatrix} =\\ \\
&=\begin{pmatrix}e^{imt}(1+m\theta\eta) & e^{imt}\sqrt{-im}(\theta-i\eta) \\
 e^{imt}\sqrt{-im}(\theta+i\eta) & e^{imt}(1-m\theta\eta) \end{pmatrix}
\end{array}
$$
It is then clear that the complex linear span of the matrix coefficients
for $m$ arbitrary and the adjoint representation gives  $\cO(\rSU(1|1))\otimes \C$.

\end{proof}

\end{document}